\RequirePackage{ifluatex}

\documentclass{article}

\usepackage{amsmath}
\usepackage{amssymb}
\usepackage{amsthm}
\usepackage{mathtools}
\usepackage[latin1]{inputenc}
     
\usepackage{graphicx}


\usepackage[usenames,dvipsnames]{pstricks}
\usepackage{epsfig}
\usepackage{pst-grad} 
\usepackage{pst-plot} 

\usepackage{ifthen}
\usepackage{color}



\newcommand{\intav}[1]{\mathchoice {\mathop{\vrule width 6pt height 3 pt depth  -2.5pt
\kern -8pt \intop}\nolimits_{\kern -6pt#1}} {\mathop{\vrule width
5pt height 3  pt depth -2.6pt \kern -6pt \intop}\nolimits_{#1}}
{\mathop{\vrule width 5pt height 3 pt depth -2.6pt \kern -6pt
\intop}\nolimits_{#1}} {\mathop{\vrule width 5pt height 3 pt depth
-2.6pt \kern -6pt \intop}\nolimits_{#1}}}

\def\polhk#1{\setbox0=\hbox{#1}{\ooalign{\hidewidth\lower1.5ex\hbox{`}\hidewidth\crcr\unhbox0}}}

\def\XXint#1#2#3{{\setbox0=\hbox{$#1{#2#3}{\int}$ }
\vcenter{\hbox{$#2#3$ }}\kern-.6\wd0}}

\renewcommand{\div}{\operatorname{div}}

\newcommand{\osc}{\operatorname{osc}}

\renewcommand{\div}{\operatorname{div}}

\newcommand{\dist}{\operatorname{dist}}

\newtheorem{Theorem}{Theorem}[section]

\newtheorem{Definition}{Definition}[section]
\newtheorem{Lemma}{Lemma}[section]
\newtheorem{Corollary}{Corollary}[section]
\newtheorem{Proposition}{Proposition}[section]
\newtheorem{Remark}{Remark}[section]

\begin{document}


\title{Improved regularity for the parabolic normalized p-Laplace equation}
\author{P\^edra D. S. Andrade,\;\; Makson S. Santos}
\date{\today}
	
\maketitle
	
\begin{abstract}

\noindent We derive regularity estimates for viscosity solutions to the parabolic normalized $p$-Laplace. By using approximation methods and scaling arguments for the normalized $p$-parabolic operator, we show that the gradient of bounded viscosity solutions is locally asymptotically Lipschitz continuous when $p$ is sufficiently close to 2. In addition, we establish regularity estimates in Sobolev spaces.   
\medskip

\noindent \textbf{Keywords}: Parabolic normalized $p$-Laplace equation; Regularity theory; Estimates in Sobolev and H\"older spaces; Approximation methods.

\medskip 

\noindent \textbf{MSC(2020)}: 35B65; 35K55; 35Q91.
\end{abstract}

\vspace{.1in}

\section{Introduction}

We study the regularity of viscosity solutions to the normalized $p$-Laplace equation
\begin{equation}\label{eq_main}
u_t(x,t) - \Delta^N_p u(x,t) = f(x,t) \;\;\;\;\;\mbox{in}\;\;\;\;\;Q_1,
\end{equation}
where $p \in (1,\infty)$, $f \in {\mathcal C}(Q_1)\cap L^\infty(Q_1)$ and $Q_1 := B_1\times(-1,0]$. We prove new and sharp regularity results for the viscosity solutions to \eqref{eq_main}, when the exponent $p$ is \emph{close} to 2. In particular, we obtain gains of regularity in Sobolev and H\"older spaces. We exploit the properties of the regularized equation
\begin{equation}\label{eq_reg}
(u_\varepsilon)_t -\Delta u_\varepsilon - (p-2)\dfrac{D^2u_\varepsilon Du_\varepsilon\cdot Du_\varepsilon}{|Du_\varepsilon|^2 + \varepsilon^2},
\end{equation}
to establish the integral estimates for the solutions of the homogeneous $p$-Laplace equation. For the regularity in H\"older spaces, we argue by approximation methods importing regularity from the {\it homogeneous heat equation} to equation \eqref{eq_main}.

The normalized $p$-Laplace operator is defined as
\[
\begin{array}{rcl}
\Delta_p^Nu & := & |Du|^{2-p}\Delta_pu =  \Delta u + (p-2)\Delta^N_\infty u \vspace{0.2cm}\\
& = & \Delta u + (p-2)\left\langle D^2u\frac{Du}{|Du|}, \frac{Du}{|Du|}\right\rangle,
\end{array}
\]
where $\Delta_pu :=|Du|^{2-p}\div(|Du|^{p-2}Du)$ is the $p$-Laplace operator. Hence, the equation \eqref{eq_main} can be seen as a uniformly parabolic model in nondivergence form, with constants $\min\{(p-1,1)\}$ and $\max\{(p-1),1\}$. We also notice that this model has a singularity on the set $\{Du = 0\}$, which implies that we can not use directly the classic $C^{1,\alpha}$ regularity theory of viscosity solutions, as in \cite{Krylov-Safonov1979, Krylov-Safonov1980, Wang1992, Imbert-Silvestre2013}. 

The normalized $p$-Laplace equation appears in many branches of mathematics, ranging from differential geometry to stochastic process. At the limit $p=1$, the equation \eqref{eq_main} is related to the level set formulation of the mean curvature flow in the work of L. C. Evans and J. Spruck \cite{Evans-Spruck}. For $1 < p < \infty$, Y. Peres and S. Sheffield in \cite{Peres-Sheffield} introduced a stochastic approach for the normalized $p$-Laplace equation using the theory of tug-of-war games with noise. For the parabolic framework we refer to the work of J. J. Manfredi, M. Parviainen and J. D. Rossi \cite{Manfredi-Parviainen-Rossi}. As $p$ approaches infinity, the equation \eqref{eq_main} finds an application in image processing; see \cite{Elmoataz-Toutain-Tenbrinck2015,Juutinen-Kawohl2006}. It is worth noticing that \cite{Kohn-Serfaty2006, Evans2007, Peres-Schramm-Sheffield-Wilson2009} address the game theoretical characterization for the borderlines cases $p=1$ and $p=\infty$. 

The normalized $p$-Laplace equation has been extensively studied by many authors. The existence and uniqueness was established in \cite{Banerjee-Garofalo2013, Does2011}. In particular, A. Banerjee and N. Garofalo in \cite{Banerjee-Garofalo2013} and K. Does in \cite{Does2011} proved Lipschitz regularity with respect to the space variable. Since the model \eqref{eq_main} is uniformly parobolic, H\"older regularity of the solutions follows from the Krylov-Safonov theory see, e.g. \cite{Krylov1987, Krylov-Safonov1980}. 

In \cite{Jin-Silvestre2017}, T. Jin and L. Silvestre showed that solutions to the homogeneous case are of class ${\mathcal C}^{1+\alpha, \frac{1+\alpha}{2}}$. In that paper, the authors follow the strategy adopted in \cite{Imbert-Silvestre2013II}, for a class of fully nonlinear degenerate equations. For the equation with a bounded right-hand side, A. Attouchi and M. Parviainen in \cite{Attouchi-Parviainen2018} also established that solutions are of class ${\mathcal C}^{1+\alpha, \frac{1+\alpha}{2}}$. More recently, the variable exponent case was studied in \cite{Fang-Zhang2021}, where Y. Fang and C. Zhang obtained H\"older regularity for the gradient of the solutions. 

Besides the regularity in H\"older spaces, many authors have studied regularity estimates in Sobolev spaces. In \cite{Hoeg-Lindqvist2020}, F. H{\o}eg and P. Lindqvist proved $W^{2,1;2}_{loc}$-regularity for the viscosity solutions, in the homogeneous setting, when $p \in \left(\frac{6}{5}, \frac{14}{5}\right)$. This range for $p$ comes from their own method, and it is not clear if it can be extended to $p \in (1, \infty)$ for $d \geq 3$. This result were improved by H. Dong, F. Peng, Y. Zhang and Y. Zhou in \cite{Dong2020} were they proved that solutions are of class $W^{2,1;q}$ for $q < 2 + \delta_{n,p}$ with $\delta_{n,p} \in (0,1)$, when $p \in (1,2)\cup\left(2, 3 + \frac{2}{d-2}\right)$.      

The purpose of this paper is to study gains of the regularity of solutions to \eqref{eq_main} when $p$ is {\it close} to 2. In this scenario, our model can be seen as a perturbation of the heat equation, as in \cite{Pimentel-Santos2020}, and then we can implement the so-called {\it approximation methods}. The approximation methods were introduced by L. Caffarelli in the seminal paper \cite{Caffarelli1989} and has been applied in a more general settings in the works of E. Teixeira, J.M. Urbano and their collaborators. We refer to the reader the papers \cite{Araujo-Teixeira-Urbano2017, DaSilva-Teixeira2017, Teixeira2014, Teixeira2014I, Teixeira-Urbano2014}, just to cite a few. See also the survey \cite{Pimentel-Santos2018}. 

We combine the approximation methods  and intrinsic scaling of the $p$-parabolic operator to obtain the desired estimate in H\"older spaces. This is the content of our first result:

\begin{Theorem}\label{theo_main}
Let $u \in {\mathcal C}(Q_1)$ be a viscosity solution to \eqref{eq_main}, with $f \in {\mathcal C}(Q_1)\cap L^\infty(Q_1)$. Given $\alpha \in (0,1)$, there exists $\varepsilon >0$, to be determined later, such that if 
\[
|p-2| \leq \varepsilon,
\] 
then $u \in {\mathcal C}^{1 + \alpha, \frac{1+\alpha}{2}}(Q_{1/2})$ and there exists a universal constant $C>0$ such that 
\[
\|u\|_{{\mathcal C}^{1 + \alpha, \frac{1+\alpha}{2}}(Q_{1/2})} \leq C\left( \|u\|_{L^{\infty}(Q_1)} + \|f\|_{L^{\infty}(Q_1)} \right).
\]
\end{Theorem}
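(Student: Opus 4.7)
The strategy is to implement Caffarelli's \emph{approximation scheme}, viewing \eqref{eq_main} as a small perturbation of the heat equation when $|p-2|$ is small. First, exploiting the $1$-homogeneity of $\Delta_p^N$, dividing $u$ by $K := \|u\|_{L^\infty(Q_1)} + \eta^{-1}\|f\|_{L^\infty(Q_1)}$ reduces matters to
\[
\|u\|_{L^\infty(Q_1)} \leq 1, \qquad \|f\|_{L^\infty(Q_1)} \leq \eta,
\]
with $\eta>0$ arbitrarily small, to be fixed by the argument; the constant $K$ is reinstated at the end to produce the estimate in the statement.

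The central technical tool I would establish is an \emph{approximation lemma}: for any $\delta>0$ there is $\varepsilon=\varepsilon(\delta)>0$ such that if $v\in\mathcal{C}(Q_1)$ with $\|v\|_\infty\leq 1$ solves, in the viscosity sense,
\[
v_t - \Delta v - (p-2)\left\langle D^2 v\,\tfrac{a Dv + q}{|a Dv + q|},\,\tfrac{a Dv + q}{|a Dv + q|}\right\rangle \;=\; g,
\]
for some $a\in[0,1]$, $q\in\mathbb{R}^N$ with $|q|\leq M$ (a universal constant fixed below), and $|p-2|+\|g\|_\infty\leq\varepsilon$, then there exists a caloric $h$ with $\|v-h\|_{L^\infty(Q_{1/2})}\leq\delta$. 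I would argue by contradiction and compactness: uniform parabolicity (with constants $\min\{p-1,1\}$ and $\max\{p-1,1\}$) yields Krylov--Safonov equicontinuity, and passing $p\to 2$ and $g\to 0$ the $(p-2)$-term drops out; stability of viscosity solutions identifies the limit as caloric.

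Combining the lemma with the smoothness of $h$ gives a one-step decay: choosing universal $\rho\in(0,1/2)$ so that $C\rho^2\leq\tfrac{1}{2}\rho^{1+\alpha}$ and then $\delta\leq\tfrac{1}{2}\rho^{1+\alpha}$ (which determines $\varepsilon$ and hence $\eta$), there is an affine $\ell(x)=c+b\cdot x$ with $|b|\leq C$ satisfying $\sup_{Q_\rho}|u-\ell|\leq\rho^{1+\alpha}$. I would then iterate by setting
\[
v_k(x,t) := \rho^{-k(1+\alpha)}\bigl[u(\rho^k x,\rho^{2k}t) - \ell_k(\rho^k x)\bigr].
\]
A direct chain-rule computation shows $v_k$ satisfies an equation of the family above with $a=\rho^{k\alpha}$, $q=D\ell_k$, and forcing $\rho^{k(1-\alpha)}f(\rho^k\cdot,\rho^{2k}\cdot)$, whose $L^\infty$-norm stays $\leq\eta$ because $\alpha<1$. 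The update rule $q_{k+1}=q_k+\rho^{k\alpha}\tilde q_k$ with $|\tilde q_k|\leq C$ gives $|q_k|\leq C\sum_j \rho^{j\alpha}=:M$, keeping the drift in a fixed ball so the approximation lemma reapplies at every scale. Iteration produces affine maps $\ell_k\to\ell_\infty$ with $\sup_{Q_{\rho^k}}|u-\ell_k|\leq\rho^{k(1+\alpha)}$, which by a standard interpolation yields the pointwise $\mathcal{C}^{1+\alpha,\frac{1+\alpha}{2}}$ estimate at the origin; translation invariance of the hypotheses promotes this to the full estimate on $Q_{1/2}$.

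The hard part will be the approximation lemma: stability of viscosity solutions near the singular set $\{Du=0\}$ as $p\to 2$ is delicate, and I expect to handle it by passing through the regularized equation \eqref{eq_reg} (as signalled in the introduction) or through upper/lower semicontinuous relaxed limits. A secondary bookkeeping point is to verify that the drift $q_k$ remains in a ball of fixed radius throughout the iteration, which, as indicated above, follows from the geometric smallness of the increments $\rho^{k\alpha}\tilde q_k$.
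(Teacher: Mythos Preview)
Your proposal is correct and follows essentially the same approximation-and-iteration scheme as the paper: a compactness/contradiction lemma pushing solutions toward caloric functions, followed by geometric decay of affine approximations on parabolic cylinders. The one organizational difference is that the paper divides the rescaled gradient direction through by $\rho^{k\alpha}$, so the drift becomes a single (possibly unbounded) vector $\xi=\rho^{-k\alpha}b_k$ and the bounded/unbounded dichotomy is handled inside the approximation lemma rather than by tracking $|q_k|\le M$ as you do; since the $(p-2)$ prefactor annihilates the nonlinear term in the limit regardless, the stability near $\{Du=0\}$ that you flag as delicate is in fact immediate here and no recourse to the regularized equation \eqref{eq_reg} is needed for this theorem.
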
  
 
Theorem \ref{theo_main} ensures that although solutions to \eqref{eq_main} are of class $C^{{1 + \beta, \frac{1+\beta}{2}}}$ ($\beta$ can be very small), the gradient of the solutions become almost Lipschitz continuous when $p$ is sufficiently close to 2. 

\begin{Remark}
The Theorem \ref{theo_main} can be extended to a more general class of equations, namely
\begin{equation}\label{eq_gen}
u_t(x,t) - |Du|^\gamma\Delta^N_p u(x,t) = f(x,t) \;\;\;\;\;\mbox{in}\;\;\;\;\;Q_1,
\end{equation}
provided
\[
|\gamma| + |p-2| \leq \varepsilon.
\]
We refer to the reader to \cite{Imbert-Jin-Silvestre2019, Attouchi-Ruosteenoja2018, Attouchi2020, Attouchi-Ruosteenoja2020} for more details.
\end{Remark}

Our next result concerns regularity in Sobolev spaces in the homogeneous setting
\begin{equation}\label{eq_main2}
u_t(x,t) - \Delta^N_p u(x,t) = 0 \;\;\;\;\;\mbox{in}\;\;\;\;\;Q_1.
\end{equation}
Here, we make use of the regularized equation \eqref{eq_reg} and the $W^{2,1;q}$-regularity theory developed by Wang in \cite{Wang1992I} to prove our estimates. More precisely, we prove the following:

\begin{Theorem}\label{theo_main2}
Let $u \in {\mathcal C}(Q_1)$ be a viscosity solution to \eqref{eq_main2}. There exists $\varepsilon_1 >0$, to be determined later, such that if
\[
|p-2| \leq \varepsilon_1,
\] 
then $u \in W^{2,1;q}(Q_{1/2})$ for every $1 < q < \infty$. In addition, there exists a universal constant $C>0$ such that 
\[
\|u\|_{W^{2,1;q}(Q_{1/2})} \leq C\|u\|_{L^{\infty}(Q_1)}.
\]
\end{Theorem}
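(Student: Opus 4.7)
The plan is to combine the regularization \eqref{eq_reg} with Wang's $W^{2,1;q}$-theory \cite{Wang1992I}, viewing the regularized equation as a small perturbation of the heat equation when $|p-2|$ is small. For each $\varepsilon > 0$, I would let $u_\varepsilon$ solve \eqref{eq_reg} (with right-hand side zero) in $Q_1$, with $u_\varepsilon = u$ on the parabolic boundary. Since \eqref{eq_reg} is a smooth, uniformly parabolic, quasilinear equation with ellipticity constants $\min\{1, p-1\}$ and $\max\{1, p-1\}$, classical quasilinear parabolic theory produces a smooth interior solution $u_\varepsilon$, and the standard stability for viscosity solutions (in the spirit of \cite{Banerjee-Garofalo2013, Does2011}) yields $u_\varepsilon \to u$ locally uniformly as $\varepsilon \to 0$, with $\| u_\varepsilon \|_{L^\infty(Q_1)} \leq \| u \|_{L^\infty(Q_1)}$ by the comparison principle.

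Since $u_\varepsilon$ is classical, I would then regard \eqref{eq_reg} as the linear uniformly parabolic equation
\[
(u_\varepsilon)_t - a_{ij}^{(\varepsilon)}(x,t)(u_\varepsilon)_{ij} = 0, \qquad a_{ij}^{(\varepsilon)}(x,t) := \delta_{ij} + (p-2)\frac{\partial_i u_\varepsilon\, \partial_j u_\varepsilon}{|Du_\varepsilon|^2 + \varepsilon^2},
\]
with coefficients $a_{ij}^{(\varepsilon)}$ treated simply as bounded measurable functions of $(x,t)$. The crucial pointwise bound
\[
\bigl| a_{ij}^{(\varepsilon)}(x,t) - \delta_{ij} \bigr| \leq |p-2|
\]
holds independently of $\varepsilon$ and of $(x,t)$; in particular, the BMO oscillation of the coefficient matrix on any parabolic cylinder is controlled by $|p-2|$. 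Fixing $q \in (1, \infty)$ and choosing $|p-2| \leq \varepsilon_1 = \varepsilon_1(q)$ sufficiently small, Wang's $W^{2,1;q}$ estimates for linear uniformly parabolic operators with small BMO coefficients then give, uniformly in $\varepsilon$,
\[
\| u_\varepsilon \|_{W^{2,1;q}(Q_{1/2})} \leq C \| u_\varepsilon \|_{L^\infty(Q_1)} \leq C \| u \|_{L^\infty(Q_1)},
\]
with $C = C(n, p, q)$ independent of $\varepsilon$.

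To conclude, I would pass to the limit. By reflexivity of $W^{2,1;q}$, the uniform bound provides a subsequence converging weakly in $W^{2,1;q}(Q_{1/2})$; the locally uniform convergence $u_\varepsilon \to u$ identifies the weak limit with $u$, placing $u$ in $W^{2,1;q}(Q_{1/2})$. Lower semicontinuity of the norm under weak convergence transfers the estimate to $u$, and since $q \in (1,\infty)$ is arbitrary (with $\varepsilon_1 = \varepsilon_1(q)$), one obtains the statement of Theorem \ref{theo_main2}.

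The main difficulty I anticipate lies in ensuring that Wang's $W^{2,1;q}$ estimate applies with a constant independent of $\varepsilon$. The point to be verified is that his estimate depends only on the ellipticity constants and on the smallness of the BMO oscillation of the coefficients, and not on any pointwise regularity of $a_{ij}^{(\varepsilon)}$ or on the regularization parameter itself. Since $|a_{ij}^{(\varepsilon)} - \delta_{ij}| \leq |p-2|$ uniformly, the required BMO smallness is precisely what the condition $|p-2| \leq \varepsilon_1(q)$ supplies, and the resulting estimate is stable under the passage $\varepsilon \to 0$.
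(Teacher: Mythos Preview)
Your proposal is correct and follows essentially the same route as the paper: regularize via \eqref{eq_reg}, freeze the gradient to view the equation as a linear (equivalently, fully nonlinear in the paper's formulation) uniformly parabolic operator whose coefficients deviate from the identity by at most $|p-2|$, invoke Wang's $W^{2,1;q}$ theory under this smallness, and pass to the limit using the uniform convergence $u_\varepsilon \to u$. Your treatment of the limit passage (weak compactness plus lower semicontinuity) and your observation that $\varepsilon_1$ a priori depends on $q$ are in fact more explicit than the paper's own argument.
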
 

The Theorem \ref{theo_main2} provides higher integrability for the viscosity solutions of \eqref{eq_main2}, with the trade off of losing the precise range on the values of $p$ for which the estimate holds true.   

\begin{Remark}
By Sobolev embeddings we observe that Theorem \ref{theo_main2} is a stronger version of Theorem \ref{theo_main} for the case $f \equiv 0$, since $\varepsilon_1$ would not depend on the H\"older exponent, see Corollary \ref{col_imp}.
\end{Remark}

The remainder of this paper is organized is follows: In Section \ref{sec_not} we fix some notations and gather a few facts used throughout the paper. In Section 3, we establish an approximation that connects our model with the heat equation. The regularity in space is subject of Section 4. We conclude the paper with the time regularity in Section 5.

\section{Notations and preliminary results}\label{sec_not}

This section puts forward elementary notations and gathers a few results used in the paper.

\subsection{Elementary notation}

Initially, we introduce some standard notations which will be used throughout the paper. In what follows $B_r \subset \mathbb{R}^d$ denotes the \emph{open ball} of radius $r$ and centered at the origin. The \emph{parabolic domain} is given by
\[
Q_r\coloneqq \{(x,t)\in \mathbb{R}^{d+1}: x \in B_r,\: t \in (-r^2,0]\} \subset\mathbb{R}^{d+1}.
\] 
We define the \emph{parabolic distance} between the points $(x_1,t_1)$ and $(x_2,t_2)$ by
	\[
	d((x_1,t_1),(x_2,t_2))\coloneqq\sqrt{|x_1-x_2|^2+|t_1-t_2|}.
	\]	
Similarly, the distance between the sets $U$ and $V$ stands for
\[
\dist(U,V)\coloneqq\inf\{d((x_1,t_1),(x_2,t_2)): (x_1, t_1) \in U, (x_2, t_2) \in V\},
\]	
where $U, V$ are subsets in $\mathbb{R}^{n+1}$.

Fix $1\leq q\leq \infty$, the \emph{parabolic Sobolev space} is defined as follows
\[
W^{2,1;q}(Q_r)\coloneqq\{u\in L^{q}(Q_r): u_t,\, Du,\, D^2u \in L^q(Q_r)\}.
\]
If $u \in W^{2,1;q}(Q_r)$, we define its norm to be
\[
\|u\|_{W^{2,1;q}(Q_r)}=\left[\|u\|^q_{L^q(Q_r)}+\|u_t\|^q_{L^q(Q_r)}+\|Du\|^q_{L^q(Q_r)}+\|D^2u\|^q_{L^q(Q_r)}\right]^{\frac{1}{q}}.
\]	
 We say that $u$ belongs to $W_{loc}^{2,1;q}(Q_r) $, if $u\in W^{2,1;q}(Q_r)$ for every $ Q'\Subset Q_r$, where $Q'\Subset Q_r$ means $\dist(Q', \partial_pQ_r)>0$, where $\partial_p$ denotes the parabolic boundary of $Q_r$.	
	
 For $0<\alpha < 1$, the \emph{parabolic H\"older space} stands for $\mathcal{C}^{\alpha,\frac{\alpha}{2}}(Q_r)$. We define its norm to be
\[
\|u\|_{\mathcal{C}^{\alpha,\frac{\alpha}{2}}(Q_r)}\coloneqq \|u\|_{L^{\infty}(Q_r)}+[u]_{\mathcal{C}^{\alpha,\frac{\alpha}{2}}(Q_r)},
\]
where $[u]_{\mathcal{C}^{\alpha,\frac{\alpha}{2}}(Q_r)}$ is the semi-norm denoted by
\[
[u]_{\mathcal{C}^{\alpha,\frac{\alpha}{2}}(Q_r)}\coloneqq \sup_{\substack{(x_1,t_1),(x_2,t_2)\in Q_r \\ (x_1,t_1)\neq(x_2,t_2)}}\frac{|u(x_1,t_1)-u(x_2,t_2)|}{ d((x_1,t_1),(x_2,t_2))^{\alpha}}.
\]
We say that $u$ is a $\alpha$-H\"older continuous with respect to the spatial variable and $\frac{\alpha}{2}$-H\"older continuous with respect to the temporal variable if its norm is finite. Similarly, we say that $u \in \mathcal{C}^{1+\alpha, \frac{1+\alpha}{2}}(Q_r)$ if there exists the spatial gradient $Du(x,t)$ for every $(x,t)$ in $Q_r$ in the classical sense and its norm
	\begin{equation*}
		\begin{aligned}
			\|u\|_{\mathcal{C}^{1+\alpha,\frac{1+\alpha}{2}}(Q_r)}&\coloneqq \|u\|_{L^{\infty}(Q_r)}+\|Du\|_{L^{\infty}(Q_r)}\\
			&+ \sup_{\substack{(x_1,t_1),(x_2,t_2)\in Q_r \\ (x_1,t_1)\neq(x_2,t_2)}}\frac{|u(x_1,t_1)-u(x_2,t_2)-Du(x_1,t_1)\cdot(x_1-x_2)|}{d((x_1,t_1),(x_2,t_2))^{1+\alpha}}.
		\end{aligned}
	\end{equation*}
is finite. It means that $Du$ is $\alpha$-H\"older continuous and $u$ is $\frac{1+\alpha}{2}$-H\"older continuous with respect to the temporal variable. For more details see \cite{CKS2000, DaSilva-Teixeira2017} and the references therein.

Finally, we define the \emph{oscillation} of the function $u$ as 
\[
\osc_{Q_1} u = \sup_{Q_1} u -\inf_{Q_1}u. 
\]


%
%

\subsection{Preliminary notions}

We start with the definition of viscosity solutions to \eqref{eq_main}.

\begin{Definition} 
Let $1<p<\infty$ and $f$ is a continuous function in $Q_1$.
We say that $u \in \mathcal{C}(Q_1)$ is a viscosity subsolution to \eqref{eq_main}. If, for every  $(x_0,t_0)\in Q_1$  and $\varphi \in {\mathcal C}^2(Q_1)$ such that $u - \varphi$ has a local maximum at $(x_0,t_0)$, we have
\[
\left\{
\begin{array}{ll}
{\varphi}_t(x_0,t_0) - \Delta_p^N\varphi(x_0,t_0)\leq f(x_0,t_0),\quad &\mbox{if}\; D\varphi(x_0,t_0)\not=0,\\
{\varphi}_t(x_0,t_0) - \Delta\varphi(x_0,t_0) - (p-2){\lambda}_{\max}(D^2 \varphi(x_0,t_0))\\
\leq f(x_0,t_0),&\mbox{if}\; D\varphi(x_0,t_0)= 0\; \mbox{and}\; p\geq 2,\\
{\varphi}_t(x_0,t_0) - \Delta\varphi(x_0,t_0) - (p-2){\lambda}_{\min}(D^2 \varphi(x_0,t_0))\\
\leq f(x_0,t_0),  &\mbox{if}\; D\varphi(x_0,t_0)= 0,\; 1<p<2.\\
\end{array}
\right.
\]

Conversely, we say that $u \in \mathcal{C}(Q_1)$ is a viscosity supersolution to  \eqref{eq_main}. If, $x_0\in Q_1$  and $\varphi \in {\mathcal C}^2(Q_1)$ such that $u - \varphi$ has a local minimum at $(x_0,t_0)$, we have
\[
\left\{
\begin{array}{ll}
{\varphi}_t(x_0,t_0) - \Delta_p^N\varphi(x_0,t_0)\geq f(x_0,t_0),\quad &\mbox{if}\; D\varphi(x_0,t_0)\not=0,\\
{\varphi}_t(x_0,t_0) - \Delta\varphi(x_0,t_0) - (p-2){\lambda}_{\min}(D^2 \varphi(x_0,t_0))\\
\geq f(x_0,t_0),&\mbox{if}\; D\varphi(x_0,t_0)= 0\; \mbox{and}\; p\geq 2,\\
{\varphi}_t(x_0,t_0) - \Delta\varphi(x_0,t_0) - (p-2){\lambda}_{\max}(D^2 \varphi(x_0,t_0))\\
\geq f(x_0,t_0),  &\mbox{if}\; D\varphi(x_0,t_0)= 0,\; 1<p<2.\\
\end{array}
\right.
\]

Whenever $u$ is a viscosity subsolution and supersolution to \eqref{eq_main} in $Q_1$, we say that $u$ is a viscosity solution to the equation.
\end{Definition}
For a general definition of this notion, we refer to the reader \cite{Attouchi-Parviainen2018}. In our arguments the scaled functions satisfy a variant of \eqref{eq_main}, namely: 

\begin{equation}\label{equationII}
u_t - \Delta u - (p - 2) \left\langle D^2 u \frac{Du + \xi}{|Du + \xi|}, \frac{Du + \xi}{|Du + \xi|} \right\rangle = f \: \: \mbox{in} \: \: Q_1, 
\end{equation}
where $\xi \in \mathbb{R}^d$ is arbitrary. On account of completeness, we proceed by stating a local compactness result used in this paper that  ensures the convergence property for the sequences.

\begin{Lemma}[Compactness]\label{compactness}
Let $u \in {\mathcal C}(Q_1)$ be a viscosity solution to \eqref{equationII}. Then for all  $r\in (0, 1)$, there exist constants $\beta \in (0, 1)$ and $C>0$ such that if $\|f\|_{L^{\infty}(Q_1)} \leq 1$  and $ \osc_{Q_1}u\leq 1$ then we have
\[
\|u \|_{\mathcal{C}^{\beta,\beta/2}(Q_r)} \leq C.
\]
\end{Lemma}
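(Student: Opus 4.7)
The plan is to observe that equation \eqref{equationII}, although nonlinear and possibly singular at the set $\{Du+\xi = 0\}$, is uniformly parabolic with ellipticity constants depending only on $p$, and then invoke the Krylov--Safonov type H\"older estimate for viscosity solutions of fully nonlinear parabolic equations.

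First, I would compute the symbol. For $q + \xi \neq 0$, set $\nu(q) := (q+\xi)/|q+\xi|$ and
\[
A(q) := I + (p-2)\,\nu(q)\otimes \nu(q).
\]
The eigenvalues of $A(q)$ are $1$ (with multiplicity $d-1$) and $p-1$, so for every $q$ with $q+\xi\neq 0$ one has
\[
\lambda\,|\eta|^2 \leq \langle A(q)\eta,\eta\rangle \leq \Lambda\,|\eta|^2, \qquad \forall\,\eta\in\mathbb{R}^d,
\]
with $\lambda := \min\{1,p-1\}$ and $\Lambda := \max\{1,p-1\}$. Consequently, for any symmetric matrix $M$,
\[
\mathcal{P}^-_{\lambda,\Lambda}(M)\;\leq\;\trace(A(q)M)\;\leq\;\mathcal{P}^+_{\lambda,\Lambda}(M),
\]
where $\mathcal{P}^\pm_{\lambda,\Lambda}$ denote the Pucci extremal operators.

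Next I would translate this algebraic bound into the language of viscosity solutions. Using the definition of viscosity sub/supersolution to \eqref{equationII}, including the fall-back clauses at $D\varphi(x_0,t_0)+\xi=0$ (which precisely compare to the extremal eigenvalues of $D^2\varphi$, i.e.\ to the corresponding Pucci operators), one checks that $u$ is a viscosity subsolution of
\[
u_t - \mathcal{P}^+_{\lambda,\Lambda}(D^2 u) \leq f \qquad \text{in } Q_1,
\]
and a viscosity supersolution of
\[
u_t - \mathcal{P}^-_{\lambda,\Lambda}(D^2 u) \geq f \qquad \text{in } Q_1,
\]
with $\|f\|_{L^\infty(Q_1)}\leq 1$. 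Crucially, the parameter $\xi$ drops out of these Pucci envelopes, so all constants are independent of $\xi$.

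Finally I would apply the Krylov--Safonov/Wang H\"older estimate (see \cite{Wang1992,Krylov-Safonov1980}) for viscosity solutions trapped between two uniformly parabolic extremal operators with bounded source. Replacing $u$ by $u - \inf_{Q_1} u$ (which solves the same equation since the operator is translation invariant) one may assume $0\leq u\leq 1$ by the oscillation hypothesis. The Krylov--Safonov estimate then yields
\[
\|u\|_{\mathcal{C}^{\beta,\beta/2}(Q_r)} \leq C\bigl(\|u\|_{L^\infty(Q_1)}+\|f\|_{L^\infty(Q_1)}\bigr) \leq C,
\]
with $\beta\in(0,1)$ and $C>0$ depending only on $d$, $p$ and $r$.

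The main obstacle is conceptual rather than technical: verifying that the singular behavior of the coefficient $A(Du+\xi)$ at critical points of $u$ does not break the comparison with the Pucci envelopes. This is resolved precisely by how the viscosity definition handles test functions with $D\varphi + \xi = 0$: the $\lambda_{\max}/\lambda_{\min}$ replacement gives exactly the Pucci bounds, so no additional ad-hoc regularization is needed. Once this point is settled, the H\"older estimate reduces to a black-box application of standard fully nonlinear parabolic theory.
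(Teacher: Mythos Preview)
The paper does not prove this lemma at all: immediately after stating it, the authors write ``For a proof of this result, we refer the reader to \cite[Lemma 3.1]{Attouchi-Parviainen2018}.'' So there is no in-paper argument to compare with.

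Your proposed route---observe that the coefficient matrix $I+(p-2)\nu\otimes\nu$ has eigenvalues $\min(1,p-1)$ and $\max(1,p-1)$ independently of $\xi$, trap $u$ between the parabolic Pucci extremal operators, and then invoke the Krylov--Safonov/Wang interior H\"older estimate---is correct and is precisely the standard mechanism behind the cited result. The only delicate point you flag, namely the behaviour at test functions with $D\varphi+\xi=0$, is indeed resolved by the $\lambda_{\max}/\lambda_{\min}$ clauses in the viscosity definition, which are exactly the Pucci envelopes; this is how Attouchi--Parviainen set things up as well. Your remark that $\xi$ disappears from the Pucci bounds is the crucial observation guaranteeing uniformity, and you state it clearly. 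In short, your plan is sound and matches the argument the paper outsources.
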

For a proof of this result, we refer the reader to \cite[Lemma 3.1]{Attouchi-Parviainen2018}. We close this section with the scaling properties of our model. Throughout the paper, we require
\begin{equation}\label{eq_scal}
\|u\|_{L^\infty(Q_1)} \leq 1 \;\;\mbox{ and } \;\; \|f\|_{L^\infty(Q_1)} \leq \varepsilon,
\end{equation} 
for some $\varepsilon$ to be determined. The conditions in \eqref{eq_scal} are not restrictive. In fact, consider the function
\[
v(x,t) = \dfrac{u(\rho x, \rho^2t)}{K},
\]
for all $0 < \rho \ll 1$ and $K > 0$. Notice that $v$ is also a viscosity solution to \eqref{eq_main} in $Q_1$, with the right-hand side 
\[
\tilde{f}(x,t) = \frac{\rho^2}{K}f(\rho x, \rho^2t).
\]  
Hence, by choosing 
\[
K = \|u\|_{L^\infty(Q_1)} + \varepsilon^{-1}\|f\|_{L^\infty(Q_1)},
\]
we can assume \eqref{eq_scal} without loss of generality.

\section{ H\"older estimates}

This section is devoted to the proof of Theorem \ref{theo_main}. As usual, constants stand for $C$ may change from line to line, and depend only on the appropriate quantities.

\subsection{Geometric Tangential Path}

First, we provide an approximation lemma relating our model with the heat equation. This lemma plays a pivotal role in the paper.

\begin{Lemma}[Approximation Lemma]\label{approximationlemma}
Let $u \in {\cal C}(Q_1)$ be a normalized viscosity solution to \eqref{equationII} with $f \in L^{\infty}(Q_1) \cap {\cal C}(Q_1)$. Given $\delta >0$ there exists $\varepsilon > 0$ such that, if 
\[
\| f \|_{L^{\infty}(Q_1)} + |p - 2 |< \varepsilon,
\] 
then we can find $h \in {\cal C}^{2, 1}(Q_{7/9})$ such that
\[
\sup_{Q_{7/9}}| u(x,t) - h(x, t)| < \delta.
\]
\end{Lemma}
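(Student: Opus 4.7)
The approach is by contradiction using a standard compactness/stability argument, tangential to the heat equation.

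Suppose the lemma fails. Then there exist $\delta_0>0$ and sequences $p_n\to 2$, $\xi_n\in\Rr^d$, $f_n$ with $\|f_n\|_{L^\infty(Q_1)}\to 0$, and viscosity solutions $u_n\in\cC(Q_1)$ of
\[
(u_n)_t-\Delta u_n-(p_n-2)\left\langle D^2u_n\,\nu_n,\nu_n\right\rangle=f_n,\qquad \nu_n:=\frac{Du_n+\xi_n}{|Du_n+\xi_n|},
\]
with $\osc_{Q_1}u_n\le 1$, such that $\sup_{Q_{7/9}}|u_n-h|\ge \delta_0$ for every $h\in\cC^{2,1}(Q_{7/9})$. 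After subtracting constants we may assume $\|u_n\|_{L^\infty(Q_1)}\le 1$.

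By the compactness Lemma \ref{compactness}, the sequence $\{u_n\}$ is uniformly bounded in $\cC^{\beta,\beta/2}(Q_r)$ for every $r\in(0,1)$ and some $\beta\in(0,1)$. By Arzel\`a--Ascoli and a diagonal extraction, a subsequence (still denoted $u_n$) converges locally uniformly in $Q_1$ to some $u_\infty\in\cC(Q_1)$. The plan is to show that $u_\infty$ solves the heat equation in $Q_{7/9}$, which is impossible for large $n$.

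The main work is in the stability step: verifying that $u_\infty$ is a viscosity solution to $(u_\infty)_t-\Delta u_\infty=0$. Fix $\varphi\in\cC^2(Q_{7/9})$ touching $u_\infty$ from above at $(x_0,t_0)$ and use the standard perturbation-of-contact-point trick to produce points $(x_n,t_n)\to(x_0,t_0)$ where $u_n-\varphi$ attains a local max. The operator in \eqref{equationII} is uniformly parabolic between the ellipticity constants $\min\{1,p_n-1\}$ and $\max\{1,p_n-1\}$, irrespective of $\xi_n$, since $\nu_n$ is always a unit vector. Hence, when $D\varphi(x_n,t_n)+\xi_n\ne 0$,
\[
\varphi_t(x_n,t_n)-\Delta\varphi(x_n,t_n)\le f_n(x_n,t_n)+|p_n-2|\,\|D^2\varphi\|_{L^\infty},
\]
and when $D\varphi(x_n,t_n)+\xi_n=0$ the same bound holds via $\lambda_{\max}$ or $\lambda_{\min}$, again controlled by $|p_n-2|\|D^2\varphi\|_{L^\infty}$. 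Passing $n\to\infty$ and using $|p_n-2|+\|f_n\|_{L^\infty}\to 0$ yields $\varphi_t(x_0,t_0)-\Delta\varphi(x_0,t_0)\le 0$; the supersolution inequality is analogous. Therefore $u_\infty$ is a viscosity solution of the heat equation in $Q_{7/9}$ and hence smooth by classical theory. Taking $h:=u_\infty\in\cC^{2,1}(Q_{7/9})$ contradicts $\sup_{Q_{7/9}}|u_n-h|\ge \delta_0$ once $n$ is large.

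The main obstacle is the stability step, specifically the fact that the shift vector $\xi_n$ is arbitrary and may be unbounded. The key observation dissolving this difficulty is that the extra term only enters the equation through the unit vector $\nu_n$, and is therefore controlled uniformly in $\xi_n$ by $|p_n-2|\|D^2\varphi\|_{L^\infty}$; the degenerate case $D\varphi+\xi=0$ must be handled by branching into the $\lambda_{\max}/\lambda_{\min}$ alternatives in the viscosity definition, but these are also $O(|p_n-2|)$ perturbations of the heat operator and thus vanish in the limit.
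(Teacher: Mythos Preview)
Your proof is correct and follows the same contradiction/compactness strategy as the paper. The one noteworthy difference is in the stability step: the paper splits into the cases $(\xi_j)$ bounded versus unbounded, extracting a limit direction $e_\infty$ in the latter case before passing to the limit equation; you instead observe that the extra term enters only through the unit vector $\nu_n$ and is therefore uniformly dominated by $|p_n-2|\,\|D^2\varphi\|_{L^\infty}$, which vanishes regardless of the behaviour of $\xi_n$. Since $p_n\to 2$, the paper's case split is in fact unnecessary here, and your argument is the more direct one; the paper's dichotomy would only be essential in a setting where $p$ does not tend to $2$ and one genuinely needs to identify a limiting operator with a fixed direction.
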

\begin{proof}
We argue by contradiction. Suppose that the statement does not hold, then there are $\delta_0 >0$ and sequences $(u_j)_{j \in \mathbb{N}}$, $({\xi}_j)_{j \in \mathbb{N}}$, $(p_j)_{j \in \mathbb{N}}$ and $(f_j)_{j \in \mathbb{N}}$ satisfying
\[
\| f_j \|_{L^{\infty}(Q_1)} + |p_j - 2 |< \frac{1}{j},
\]
\begin{equation} \label{limequation1}
(u_j)_t - \Delta u_j - (p_j - 2) \left\langle D^2 u_j \frac{Du_j + {\xi}_j}{|Du_j + {\xi}_j|}, \frac{Du_j + {\xi}_j}{|Du_j + {\xi}_j|} \right\rangle = f_j \: \: \text{in} \: \: Q_1,
\end{equation}
and for every $h \in {\cal C}^{2, 1}(Q_{7/9})$ and for all $ j \in \mathbb{N}$,
\begin{equation}\label{inequality1}
\sup_{(x,t) \;\in\; Q_{7/9}}| u_j(x,t) - h(x, t)| > \delta_0.
\end{equation}

From Lemma \ref{compactness}, we have
\[
u_j \in \mathcal{C}^{\beta,\frac{\beta}{2}}(Q_{8/9}) \quad \mbox{and} \quad \|u_j \|_{\mathcal{C}^{\beta,\frac{\beta}{2}}(Q_{8/9})} \leq C,
\]
where $0<\beta<1$ and $C>0$ are constants that do not depend on $j \in \mathbb{N}$. Applying the Arzel\'a-Ascoli Theorem, there exist a subsequence $(u_j)_{j \in \mathbb{N}}$ and a continuous function $u_{\infty}$ so that $(u_j)_{j \in \mathbb{N}}$ converges uniformly to $u_{\infty}$ in $Q_{8/9}$.

We now examine two different cases. We start by considering the case in which the sequence $({\xi}_j)_{j \in\mathbb{N}}$ is bounded. Using local compactness of $\mathbb{R}^d$, up to a subsequence ${\xi}_j$ converges to ${\xi}_{\infty}$. 
Evaluating the limit in \eqref{limequation1} as $j$ approaches infinity, we obtain that $u_{\infty}$ satisfies
\begin{equation}\label{equationlim}
(u_{\infty})_t -\Delta u_{\infty} = 0\;\;\; \mbox{in}\;\;\; Q_{8/9}.
\end{equation}
Since $u_{\infty}$ is a viscosity solution of  (\ref{equationlim}), it follows that $u_{\infty}$ is of class $ {\cal C}^{2, 1}$.

On the other hand, if the sequence $({\xi}_j)_{j \in\mathbb{N}}$ is unbounded, we choose a subsequence ${\xi}_j$ such that $|{\xi}_j|$ goes to infinity for every $j$. By taking $e_j := \frac{{\xi}_j}{|{\xi}_j|}$ we have  $e_j \rightarrow e_{\infty}$, for some $e_{\infty} \in \mathbb{R}^d$. Hence, we may rewrite the equation \eqref{limequation1} as follows:
\[
(u_j)_t - \Delta u_j - (p_j - 2) \left\langle D^2 u_j \frac{Du_j |{\xi}_j|^{-1}+ e_j}{|Du_j|{\xi}_j|^{-1}+ e_j|}, \frac{Du_j |{\xi}_j|^{-1}+ e_j}{|Du_j|{\xi}_j|^{-1}+ e_j|} \right\rangle = f_j \: \: \text{in} \: \: Q_1.
\]
Applying the limit in the equation above, we get that $u_{\infty}$ solves \eqref{equationlim}. Once again, we have that $u_{\infty}$ belongs to ${\cal C}^{2, 1}(Q_{7/9})$. Finally, by taking $h = u_{\infty}$, we reach a contradiction.
\end{proof}



\subsection{Regularity in space}

We proceed with the regularity in the spatial variable.

\begin{Proposition}\label{prop_spac}
Let $u \in {\cal C}(Q_1)$ be a normalized viscosity solution of 
\[
u_t - \Delta u - (p - 2) \left\langle D^2 u \frac{Du + \xi}{|Du +  \xi|}, \frac{Du +  \xi}{|Du +  \xi|} \right\rangle = f \: \: \text{in} \: \: Q_1,
\]
for any arbitrary vector $ \xi \in \mathbb{R}^d$ and $f \in L^{\infty}(Q_1) \cap {\cal C}(Q_1)$. Then, given $\alpha \in (0, 1)$, there exists $\varepsilon> 0$ such that 
\[
 \| f\|_{L^{\infty}(Q_1)} + |p - 2|\leq \varepsilon,
\]
we can find a constant $0< \rho <1$ and a sequence of affine functions $({\ell}_n)_{n\in N}$ of the form ${\ell}_n(x, t): = a_n + b_n\cdot x$ satisfying  
\[
\sup_{ Q_{\rho^n}} |u(x, t) - {\ell}_n(x, t) | \leq {\rho}^{n (1 + \alpha)},
\] 
\[
|a_{n+1} - a_n | \leq  C {\rho}^{n(1+ \alpha)}
\]
and 
\[
|b_{n+1} - b_n |\leq C {\rho}^{n\alpha}
\]
for a constant $C>0$ and for every $n \in \mathbb{N}$.
\end{Proposition}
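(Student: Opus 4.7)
The plan is to prove the proposition by induction on $n$, with Lemma \ref{approximationlemma} as the engine at each scale. For the base case $n=0$ we take $\ell_0\equiv 0$, and the bound $\|u\|_{L^\infty(Q_1)}\le 1=\rho^0$ is the normalization in \eqref{eq_scal}. For the inductive step the strategy is: (i) rescale so that the object under study is normalized on $Q_1$; (ii) apply Lemma \ref{approximationlemma} to approximate it in $L^\infty(Q_{7/9})$ by a classical solution $h$ of the heat equation; (iii) use the $\mathcal{C}^{2,1}$ regularity of $h$ to approximate $h$ on $Q_\rho$ by its first-order Taylor polynomial $L(x,t):=h(0,0)+Dh(0,0)\cdot x$, with error $O(\rho^2)$; (iv) combine the two errors so the total is at most $\rho^{1+\alpha}$; (v) unwind the rescaling to read off $\ell_{n+1}$.

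Assume $\ell_n$ has been constructed. The key rescaling is
\[
v(x,t):=\frac{u(\rho^n x,\rho^{2n}t)-\ell_n(\rho^n x,\rho^{2n}t)}{\rho^{n(1+\alpha)}},
\]
which by the inductive hypothesis satisfies $\|v\|_{L^\infty(Q_1)}\le 1$. A direct chain-rule computation, using $Du(\rho^n x,\rho^{2n}t)=\rho^{n\alpha}Dv(x,t)+b_n$ and factoring $Du+\xi=\rho^{n\alpha}(Dv+\rho^{-n\alpha}(b_n+\xi))$ inside the unit vector, shows that $v$ is a viscosity solution in $Q_1$ of an equation of the same form as \eqref{equationII}, now with $\xi$ replaced by $\xi_n':=\rho^{-n\alpha}(b_n+\xi)$ and right-hand side $\tilde f(x,t):=\rho^{n(1-\alpha)}f(\rho^n x,\rho^{2n}t)$, which satisfies $\|\tilde f\|_{L^\infty(Q_1)}\le\|f\|_{L^\infty(Q_1)}$ since $\rho<1$ and $\alpha<1$. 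Because the statement of the proposition allows \emph{arbitrary} $\xi\in\mathbb{R}^d$, the hypotheses of Lemma \ref{approximationlemma} apply to $v$ uniformly in $n$. First fix $\rho\in(0,1)$ so small that $C_0\rho^{1-\alpha}\le 1/2$, where $C_0$ is a universal $\mathcal{C}^{2,1}$-bound on $Q_\rho$ for classical heat solutions bounded by $2$ on $Q_{7/9}$; then set $\delta:=\rho^{1+\alpha}/2$ and let $\varepsilon>0$ be the constant provided by Lemma \ref{approximationlemma} for this $\delta$.

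Applying Lemma \ref{approximationlemma} to $v$, there exists $h\in\mathcal{C}^{2,1}(Q_{7/9})$ with $\sup_{Q_{7/9}}|v-h|\le\delta$. By Taylor's theorem, $\sup_{Q_\rho}|h-L|\le C_0\rho^2\le \rho^{1+\alpha}/2$, hence $\sup_{Q_\rho}|v-L|\le\rho^{1+\alpha}$. Reversing the rescaling and setting
\[
\ell_{n+1}(y,s):=\ell_n(y,s)+\rho^{n(1+\alpha)}h(0,0)+\rho^{n\alpha}Dh(0,0)\cdot y,
\]
we obtain $\sup_{Q_{\rho^{n+1}}}|u-\ell_{n+1}|\le\rho^{(n+1)(1+\alpha)}$, together with $a_{n+1}-a_n=\rho^{n(1+\alpha)}h(0,0)$ and $b_{n+1}-b_n=\rho^{n\alpha}Dh(0,0)$. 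The uniform bounds $|h(0,0)|,|Dh(0,0)|\le C$ then give the increment estimates claimed in the statement.

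The main obstacle is the covariance of the equation under the above rescaling: the shifted gradient $Du+\xi$ becomes $Dv+\xi_n'$ with $\xi_n'=\rho^{-n\alpha}(\xi+b_n)$, whose modulus may grow without control in $n$. The induction only closes because Lemma \ref{approximationlemma} is uniform in $\xi\in\mathbb{R}^d$; indeed, the case analysis in its proof (bounded versus unbounded subsequences of $\xi_j$) is precisely what is needed to absorb arbitrarily large drifts produced by the iteration. A secondary, purely bookkeeping, point is to verify that the rescaled source $\tilde f$ remains admissible at every step, which is automatic from $\rho^{n(1-\alpha)}\le 1$ and the smallness of $\|f\|_{L^\infty(Q_1)}$.
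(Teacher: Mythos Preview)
Your proof is correct and follows essentially the same approach as the paper: induction on the scale, with the rescaling $v(x,t)=\rho^{-n(1+\alpha)}\big(u(\rho^n x,\rho^{2n}t)-\ell_n(\rho^n x,\rho^{2n}t)\big)$ feeding back into Lemma~\ref{approximationlemma}, then Taylor expansion of the approximating caloric function, and the same universal choices $\rho=(2C_0)^{-1/(1-\alpha)}$, $\delta=\rho^{1+\alpha}/2$. Your tracking of the drift as $\xi_n'=\rho^{-n\alpha}(\xi+b_n)$ is in fact slightly more careful than the paper's (which writes only $\rho^{-k\alpha}b_k$), and your remark that the uniformity of Lemma~\ref{approximationlemma} in $\xi$ is precisely what absorbs the blow-up of $\xi_n'$ makes explicit the key structural point of the iteration.
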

\begin{proof}
The result follows by induction argument. For simplicity, we split the proof into two steps.

{\it Step 1.}  Take $\delta >0$ to be determined later. Lemma \ref{approximationlemma} implies that there exists a function $h \in {\cal C}^{2, 1}{(Q_{8/9})}$ satisfying 
\[
\sup_{Q_{7/9}} |u(x, t) - h(x, t)|\leq \delta.
\]
Set 
\[
{\ell}(x, t) := h(0,0) +  Dh(0,0)\cdot x.
\]
Recall that $h(0,0)$ and  $Dh(0,0)$ are uniformly bounded by a constant. Since $h \in {\cal C}^{2, 1}{(Q_{8/9})}$, we have that there exists a constant $C>0$ such that
\[
\sup_{Q_{\rho}}|h(x, t) - \ell(x, t) | \leq C {\rho}^2,
\]
where $\rho$ is a parabolic distance between the points $(x, t)$ and $(0,0)$. As consequence from the triangular inequality, we obtain
\[
\begin{array}{ccl}
\sup_{Q_{\rho}} |u(x, t) - \ell(x, t)| & \leq & \sup_{Q_{\rho}} |u(x, t) - h(x, t)| + \sup_{Q_{\rho}} |h(x, t) - \ell(x, t)| \\
& \leq & \delta + C  {\rho}^2. \\                         
\end{array}
\]
Making universal choices, we define
\[
\rho := \left(\frac{1}{2C}\right)^{\frac{1}{1 - \alpha}} \quad \text{and} \quad \delta:= \frac{\rho^{1 + \alpha}}{2}.
\]
Notice that the universal choice of $\delta$ determines the value of $\varepsilon>0$ through the Lemma \ref{approximationlemma}. Therefore
\[
\sup_{Q_{\rho}} |u(x, t) - \ell(x, t)| \leq \rho^{1+ \alpha}.
\]
This completes the case $n=1$.

{\it Step 2.} Assume that the case $n =k$ has been verified. We shall prove the case $n = k + 1$. First, let us introduce an auxiliary function $v_k:Q_1 \rightarrow \mathbb{R}^d $ defined by
\[
v_k(x, t): = \frac{u(\rho^k x, {\rho}^{2k} t) - {\ell}_k(\rho^k x, {\rho}^{2k} t) }{{\rho}^{k (1 + \alpha)}}.
\]
Notice that $v_k$ solves the following equation:
\[
(v_k)_t - \Delta v_k - (p-2) \left\langle D^2 v_k \frac{Dv_k + {\rho}^{-k \alpha}{b}_k}{|Dv_k + {\rho}^{-k \alpha}{b}_k|}, \frac{Dv_k  + {\rho}^{-k \alpha}{b}_k}{|Dv_k +{\rho}^{-k \alpha}{b}_k|} \right \rangle = f_k \: \: \text{in} \: \: Q_1,
\]
where $f_k := \frac{1}{{\rho}^{k ( \alpha -1)}} f$. That means $ f_k \in L^{\infty}(Q_1)$, if only and if, $\alpha < 1$. By induction hypothesis, we have that $v_k$ is normalized function in $Q_{8/9}$. Hence $v_k$ satisfies the assumptions of Lemma \ref{approximationlemma}, which ensures the existence of $\tilde{h} \in \mathcal{C}^{2,1}(Q_{7/9})$, so that
\[
\sup_{Q_{7/9}}| u(x,t) - \tilde{h}(x, t)| < \delta.
\]
As a consequence of Step $1$, there exists an affine function $\tilde{\ell}$ such that
\[
\sup_{Q_{\rho}} |v_k(x, t) - \tilde{\ell}(x,t) | \leq \rho^{1+ \alpha}.
\]
Defining $\ell_{k+1}(x, t):= {\ell}_k( x, t) + \rho^{k(1+\alpha)}\tilde{\ell}(\rho^{-k} x, \rho^{-2k}t)$ yields
\[
\sup_{Q_{{\rho}^{k+1}}} |u(x, t) - \ell_{k+1}(x, t)| \leq \rho^{(k+1)(1+ \alpha)}.
\]
Also, the coefficients satisfy
\begin{equation}\label{coefficientsa}
|a_{k+1} - a_k | \leq  C {\rho}^{k(1+ \alpha)}
\end{equation}
and 
\begin{equation}\label{coefficientsb}
|b_{k+1} - b_k |\leq C {\rho}^{k\alpha}
\end{equation}
for every $k \in \mathbb{N}$, and the Proposition is concluded.
\end{proof}

\begin{proof}[Proof of Theorem \ref{theo_main}]
From \eqref{coefficientsa} and \eqref{coefficientsb}, we conclude that the sequences $(a_n)_{n \in \mathbb{N}}$ and $(b_n)_{n \in \mathbb{N}}$ are Cauchy sequences, consequently there are constants $a_{\infty}$ and $b_{\infty}$ such that
\[
\lim_{n \rightarrow \infty}a_n = a_{\infty} \quad \mbox{and} \quad \lim_{n \rightarrow \infty}b_n = b_{\infty}.
\]
Moreover, we have the estimates
\[
|a_k - a_{\infty}| \leq  C {\rho}^{k(1+ \alpha)} \quad \mbox{and} \quad |b_k  - b_{\infty}|\leq C {\rho}^{k\alpha}.
\]
Given any $0< \rho \ll 1$, let $k$ be a natural number such that $\rho^{k+1} < r < \rho^k$. Thus, we estimate from the previous computations
\[
\begin{array}{ccl}
\sup_{Q_r} |u(x, t)- {\ell}_{\infty}(x, t)| & \leq & \sup_{Q_{\rho^k}} |u(x, t) -{\ell}_k(x,t)| + \sup_{Q_{\rho^k}} | {\ell}_k(x,t) - {\ell}_{\infty}(x, t)|\vspace{0.1cm}\\
                             & \leq & {\rho}^{k (1 + \alpha)} + |a_k - a_{\infty}| + {\rho}^k|b_k - b_{\infty}|\vspace{0.1cm}\\
                              & \leq & {\rho}^{k (1 + \alpha)} + C {\rho}^{k (1 + \alpha)} +  C {\rho}^k \cdot {\rho}^{k\alpha}\vspace{0.1cm}\\
                             & \leq & C \left(\frac{1}{\rho}\right)^{(1 + \alpha)} {\rho}^{(k+1)(1 + \alpha)} \vspace{0.1cm}\\
                              &\leq & C r^{(1 + \alpha)}. \\
\end{array}
\]

To conclude the proof, we characterize the coefficients $a_{\infty}$ and $b_{\infty}$. In fact, evaluating the limit in the following inequality
\begin{equation}
\sup_{Q_{{\rho}^n}}|u(x, t) - {\ell}_n(x,t)| \leq C \rho^{n(1+ \alpha)},
\end{equation}
as $n$ approaches infinity on $(x, t)=(0,0)$, we obtain that $a_{\infty} = u(0,0)$. From \cite[Lemma A.1]{Attouchi-Parviainen2018}, we can conclude that $b_{\infty}= Du(0,0)$.
This finishes the proof.
\end{proof}

\subsection{Regularity in time}

For account of completeness we present the proof of the regularity in the time variable, see also \cite[Lemma 3.5]{Attouchi-Parviainen2018}.

\begin{Lemma}[Regularity in time]\label{time_reg}
Let $u \in {\mathcal C}(Q_1)$ be a normalized viscosity solution to \eqref{eq_main}. Assume that $\rho$ is as in Proposition \ref{prop_spac}. Given $\alpha \in (0,1)$, there exists $\varepsilon$ such that if
\[ 
\|f\|_{L^{\infty}(Q_1)} + |p-2| < \varepsilon,
\] 
then for every $t \in (-r^2,0)$
\[
|u(0,t) - u(0,0)| \leq C|t|^{\frac{1+\alpha}{2}},
\]
where $C > 0$ is a universal constant.
\end{Lemma}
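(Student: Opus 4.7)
The plan is to deduce the time regularity at $(0,0)$ directly from the spatial expansion furnished by Proposition \ref{prop_spac}, without invoking any separate barrier or comparison argument. The key observation is that each affine approximant $\ell_n(x,t)=a_n+b_n\cdot x$ produced by that proposition is independent of $t$, so its control over $u$ on the whole parabolic cylinder $Q_{\rho^n}=B_{\rho^n}\times(-\rho^{2n},0]$ automatically encodes a time-regularity statement at the spatial origin.

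Concretely, I would first apply Proposition \ref{prop_spac} with $\xi=0$ for the given exponent $\alpha$, yielding the sequence $(\ell_n)_{n\in\mathbb{N}}$ together with the coefficient estimates $|a_{n+1}-a_n|\le C\rho^{n(1+\alpha)}$ and $|b_{n+1}-b_n|\le C\rho^{n\alpha}$. As in the proof of Theorem \ref{theo_main}, these make $(a_n)$ and $(b_n)$ Cauchy with limits $a_\infty=u(0,0)$ and $b_\infty=Du(0,0)$, and telescoping the geometric series gives
\[
|a_n-u(0,0)|\le C\rho^{n(1+\alpha)}\qquad\text{for every }n\in\mathbb{N}.
\]

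Next, given $t\in(-r^2,0)$ with $r$ small enough, I would pick the unique integer $n$ such that $\rho^{2(n+1)}<|t|\le\rho^{2n}$; this is possible because $\rho<1$ is universal. Since $(0,t)\in Q_{\rho^n}$ and $\ell_n(0,t)=a_n$, Proposition \ref{prop_spac} yields $|u(0,t)-a_n|\le\rho^{n(1+\alpha)}$. Combining with the telescoping bound above and using $\rho^n\le\rho^{-1}|t|^{1/2}$, one obtains
\[
|u(0,t)-u(0,0)|\le|u(0,t)-a_n|+|a_n-u(0,0)|\le C\rho^{n(1+\alpha)}\le\frac{C}{\rho^{1+\alpha}}\,|t|^{\frac{1+\alpha}{2}},
\]
which is the claimed inequality with a universal constant.

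I expect no substantive obstacle in this argument, since the parabolic scaling $Q_{\rho^n}=B_{\rho^n}\times(-\rho^{2n},0]$ is built precisely so that the time radius matches the square of the spatial radius; the proof then reduces to choosing the correct dyadic scale $n$ compatible with $t$. The only minor point to verify is that the threshold $\varepsilon$ coming from Proposition \ref{prop_spac} (which does depend on $\alpha$) is indeed the one used here, and that the smallness of $r$ is compatible with the scale at which that proposition was set up.
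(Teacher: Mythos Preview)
Your argument is correct and in fact more direct than the paper's. The paper proceeds by defining $v(x,t):=u(x,t)-u(0,0)-Du(0,0)\cdot x$, using the spatial expansion to bound $\osc_{B_r}v(\cdot,t)\le Cr^{1+\alpha}$ at each fixed time, and then invoking an external oscillation result (Lemma~2.2 of \cite{Attouchi-Parviainen2018}) to upgrade this to $\osc_{Q_r}v\le Cr^{1+\alpha}+4r^2\|f\|_{L^\infty}$, from which $|u(0,t)-u(0,0)|=|v(0,t)|\le C|t|^{\frac{1+\alpha}{2}}$ follows. You instead exploit directly that the approximation in Proposition~\ref{prop_spac} is stated on the full parabolic cylinder $Q_{\rho^n}$ and that $\ell_n$ is time-independent, so evaluating at $x=0$ and choosing the right dyadic scale $n$ gives the time estimate without any further PDE input. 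Your route is more elementary and self-contained; the paper's route, on the other hand, isolates the passage from spatial to space-time oscillation as a general principle, which may be of independent use.
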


\begin{proof}
For $(x,t) \in Q_r$, we set
\[
v(x,t) := u(x,t) - u(0,0) - Du(0,0)\cdot x.
\]
Proposition \ref{prop_spac} implies that
\[
|v(x_1, t) - v(x_2,t)| \leq Cr^{1+\alpha},
\]
for $x_1,x_2 \in B_r$, $t \in [-r^2,0]$, and consequently we have
\[
\mbox{osc}_{B_r}v(\cdot,t) \leq Cr^{1+\alpha} =: A. 
\]
We claim that $\mbox{osc}_{Q_r}v \leq CA + 4r^2\|f\|_{L^{\infty}(Q_1)}$. In fact, notice that $v$ solves
\[
\partial_tv - \Delta v -(p-2)\left\langle D^2 v \frac{Dv + b}{|Dv + b|}, \frac{Dv + b}{|Dv + b|} \right\rangle = f \;\;\;\mbox{ in }\;\; Q_r,
\]
where $b:=Du(0,0)$. Employing the Lemma 2.2 in \cite{Attouchi-Parviainen2018}, we obtain
\[
\mbox{osc}_{Q_r}v \leq Cr^{1+\alpha} + 4r^2\|f\|_{L^\infty(Q_1)}.
\]
Therefore, 
\[
|u(0,t) - u(0,0)| = |v(0,t)| \leq C|t|^{\frac{1+\alpha}{2}}.
\]
\end{proof}

\section{Regularity in Sobolev spaces}

In this section, we present the proof of Theorem \ref{theo_main2}. Here, we examine the homogeneous problem
\begin{equation}\label{eq_main22}
u_t(x,t) - \Delta^N_p u(x,t) = 0 \;\;\;\;\;\mbox{in}\;\;\;\;\;Q_1.
\end{equation}

Let $u \in {\mathcal C}(Q_1)$ be a viscosity solution to \eqref{eq_main22}. We consider the following regularized Dirichlet problem:
\begin{equation}\label{eq_regu}
\left\{
\begin{array}{rcl}
\displaystyle (v_\varepsilon)_t - \Delta v_\varepsilon -(p-2)\dfrac{D^2v_\varepsilon Dv_\varepsilon\cdot Dv_\varepsilon}{|Dv_\varepsilon|^2 + \varepsilon^2} & = & 0 \;\;\mbox{ in }\;\; Q_{3/4} \vspace{0.2cm}\\
v_\varepsilon & = & u \;\;\mbox{ on }\;\;\partial Q_{3/4}.
\end{array}
\right.
\end{equation}

It is well know that $v_\varepsilon$ is a classical solution (in the interior), and the gradient of $v_\varepsilon$ is uniformly bounded with respect to $\varepsilon$, see for instance \cite{Does2011}.

\begin{proof}[Proof of Theorem \ref{theo_main2}] Consider the operator
\[
F(D^2u, x,t) := -\Delta u - (p-2)\dfrac{D^2u Dv_\varepsilon\cdot Dv_\varepsilon}{|Dv_\varepsilon|^2 + \varepsilon^2}.
\]
First, notice that $F$ is a uniformly elliptic operator with constants $\lambda = \min(1, p-1)$ and $\Lambda = \max(1, p-1)$. In addition, the oscillation of $F$ given by
\[
\theta_F(x,t) := \displaystyle \sup_M\dfrac{|F(M, x, t) - F(M, 0, 0)|}{|M| + 1},
\]
satisfies
\[
\theta_F(x,t) \leq 2|p-2|.
\]
It follows from \eqref{eq_regu} that $v_\varepsilon$ solves
\[
(v_\varepsilon)_t + F(D^2v_\varepsilon, x,t) = 0.
\]
Hence, by applying \cite[Theorem 5.7]{Wang1992}, we assure that there exists a positive constant $\varepsilon_1$, such that if
\[
|p-2| \leq \varepsilon_1,
\]
we have that $v_\varepsilon \in W_{loc}^{2,1;q}(Q_1)$ for all $q \in [1, \infty)$ and there is a contant $C>0$ independent of $\varepsilon_1$ such that
\begin{equation}\label{eq_estsob}
\|v_\varepsilon\|_{W^{2,1;q}} \leq C.
\end{equation}
Since $v_\varepsilon \to u$ uniformly in compact sets, see \cite[Lemma 3.1]{Hoeg-Lindqvist2020}, we obtain that $u$ also belongs to $W_{loc}^{2,1;q}(Q_1)$ and satisfies the estimate \eqref{eq_estsob}. This finishes the proof. 
\end{proof}

The following result is a direct consequence of the Theorem \ref{theo_main2}. 
\begin{Corollary}\label{col_imp}
Let $u \in {\mathcal C}(Q_1)$ be a normalized viscosity solution of \eqref{eq_main22}. Then $ u \in {\mathcal C}_{loc}^{1+\alpha, \frac{1+\alpha}{2}}(Q_1)$, with the estimate
\[
\| u \|_{{\mathcal C}^{1+\alpha, \frac{1+\alpha}{2}}(Q_1)} \leq  C\|u\|_{L^{\infty}(Q_1)}.
\]
where $C$ is a positive constant and for every $\alpha \in (0,1)$.
\end{Corollary}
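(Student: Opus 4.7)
The plan is to deduce the Corollary from Theorem \ref{theo_main2} via a parabolic Sobolev embedding, exploiting the fact that the condition $|p-2|\leq \varepsilon_1$ appearing in Theorem \ref{theo_main2} does not depend on the H\"older exponent $\alpha$.

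First I would fix the threshold $\varepsilon_1 > 0$ from Theorem \ref{theo_main2} and assume $|p-2|\leq \varepsilon_1$. Theorem \ref{theo_main2} then ensures that $u \in W_{loc}^{2,1;q}(Q_1)$ for every $q \in (1,\infty)$, together with the interior estimate
\[
\|u\|_{W^{2,1;q}(Q_{1/2})} \leq C_q \|u\|_{L^\infty(Q_1)}.
\]
Next, given any $\alpha \in (0,1)$, I would invoke the Morrey-type parabolic Sobolev embedding: if $q > d+2$, then
\[
W^{2,1;q}(Q_{1/2}) \hookrightarrow \mathcal{C}^{1+\beta,\frac{1+\beta}{2}}(Q_{1/2}), \qquad \beta = 1 - \frac{d+2}{q}.
\]
Choosing $q = q(\alpha,d)$ large enough so that $\beta \geq \alpha$, the embedding yields
\[
\|u\|_{\mathcal{C}^{1+\alpha,\frac{1+\alpha}{2}}(Q_{1/2})} \leq C\,\|u\|_{W^{2,1;q}(Q_{1/2})} \leq C\,\|u\|_{L^\infty(Q_1)},
\]
for a constant $C$ depending only on $d$, $p$ and $\alpha$. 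A standard covering argument upgrading $Q_{1/2}$ to an arbitrary compact subset of $Q_1$ then produces the localized statement $u \in \mathcal{C}^{1+\alpha,\frac{1+\alpha}{2}}_{loc}(Q_1)$.

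The only minor subtlety is verifying that the parabolic embedding $W^{2,1;q} \hookrightarrow \mathcal{C}^{1+\beta,(1+\beta)/2}$ is applicable in the present form; this is classical and can be cited, e.g., from Lieberman's monograph on parabolic equations or from \cite{CKS2000}. Since $q$ can be taken as large as one pleases by Theorem \ref{theo_main2}, the exponent $\alpha$ may in fact range over the entire open interval $(0,1)$ without further restriction on $p$, which is precisely the improvement over Theorem \ref{theo_main} announced in the Remark preceding the Corollary. There is no substantive obstacle in the argument; the content has been absorbed into the proof of Theorem \ref{theo_main2}.
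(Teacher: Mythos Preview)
Your argument is correct and follows exactly the route taken in the paper: the authors simply state that the corollary follows immediately from Theorem~\ref{theo_main2} combined with general Sobolev embeddings. Your version is more detailed (specifying the required $q>d+2$ and the exponent $\beta = 1 - (d+2)/q$), but the approach is the same.
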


\begin{proof}
The corollary follows immediately from Theorem \ref{theo_main2} with general Sobolev inequalities in Sobolev spaces.
\end{proof}

\bigskip
{\bf Acknowledgement:} PA was supported by CAPES - Brazil. MS was partially supported by CONACyT-MEXICO Grant A1-S-48577. This study was financed in part by the Coordena\c c\~ao de Aperfei\c coamento de Pessoal de N\'ivel Superior - Brazil (CAPES) - Finance Code 001.

\bibliography{andrade_santos}

\bibliographystyle{plain}

\bigskip

\noindent\textsc{P\^edra D. S. Andrade}\\
Department of Mathematics\\
Pontifical Catholic University of Rio de Janeiro -- PUC-Rio\\
22451-900, G\'avea, Rio de Janeiro-RJ, Brazil\\
\noindent\texttt{pedra.andrade@mat.puc-rio.br}

\vspace{.15in}

\noindent\textsc{Makson S. Santos}\\
Center of Investigations in Mathematics (CIMAT)\\
36000 Guanajuato Gto - MEXICO \\
\noindent\texttt{makson.santos@cimat.mx}


\end{document}